\newtheorem{thm}{Theorem}
\newtheorem{lemma}[thm]{Lemma}
\newtheorem{prop}[thm]{Proposition}
\newtheorem{cor}[thm]{Corollary}
\newtheorem{fact}[thm]{Fact}
\theoremstyle{definition}
\newtheorem{rmk}[thm]{Remark}
\theoremstyle{remark}
\def\dotminussym#1#2{%
  \setbox0=\hbox{$\m@th#1-$}%
  \kern.5\wd0%
  \hbox to 0pt{\hss\hbox{$\m@th#1-$}\hss}%
  \raise.6\ht0\hbox to 0pt{\hss$\m@th#1.$\hss}%
  \kern.5\wd0}
\newcommand{\dotminus}{\mathbin{\mathpalette\dotminussym{}}}
\renewcommand{\r}{\mathbb{R}}
\newcommand{\Z}{\mathbb{Z}}
\newcommand{\curly}[1]{\mathcal{#1}}
\newcommand{\B}{\curly{B}}
\newcommand{\la}{\curly{L}}
\renewcommand{\to}{\rightarrow}
\newcommand{\cstar}{$\mathrm{C}^*$}
\newcommand{\e}{\epsilon}
\def \K {{\mathcal K}}
\def \<{\langle}
\def \>{\rangle}
\def \*Z {{{^*}\Z}}
\def \((  {(\!(}
\def \)) {)\!)}
\numberwithin{equation}{section}
\def \u{\mathcal U}
\def \os{\operatorname{os}}
\newcommand{\tom}[1]{\textbf{\Large !!}{\footnotesize[[TS- #1]]}}
\DeclareMathOperator{\EX}{\mathcal{EX}}
\title{On the axiomatizability of \cstar-algebras as operator systems}
\author{Isaac Goldbring and Thomas Sinclair}
\thanks{Goldbring's work was partially supported by NSF CAREER grant DMS-1349399.}
\address {Department of Mathematics, Statistics, and Computer Science, University of Illinois at Chicago, Science and Engineering Offices M/C 249, 851 S. Morgan St., Chicago, IL, 60607-7045}
\email{isaac@math.uic.edu}
\urladdr{http://www.math.uic.edu/~isaac}
\address{Mathematics Department, Purdue University, 150 N. University Street, West Lafayette, IN 47907-2067}
\email{tsincla@purdue.edu}
\urladdr{http://www.math.purdue.edu/~tsincla/}
\begin{document}

\begin{abstract}
We show that the class of unital \cstar-algebras is an elementary class in the language of operator systems.  As a result, we have that there is a definable predicate in the language of operator systems that defines the multiplication in any \cstar-algebra.  Moreover, we prove that the aforementioned class is $\forall\exists\forall$-axiomatizable but not $\forall\exists$-axiomatizable nor $\exists\forall$-axiomatizable.
\end{abstract}

\maketitle


Recall that a \cstar-algebra is a $^*$-subalgebra of $\B(H)$, the $^*$-algebra of bounded operators on a complex Hilbert space, that is closed in the operator norm topology.  In this note, we assume that all \cstar-algebras are unital, namely that they contain the identity operator.  As shown in \cite[Proposition 3.3]{modeloperator2}, there is a natural (continuous) first-order language $\la_{\mathrm{C}^*}$ in which $\K_{\mathrm{C}^*}$, the class of $\la_{\mathrm{C}^*}$-structures that are unital \cstar-algebras, is an elementary class, meaning that there is a (universal) $\la_{\mathrm{C}^*}$-theory $T_{\mathrm{C}^*}$ for which $\K_{\mathrm{C}^*}$ is the class of models of $T_{\mathrm{C}^*}$; in symbols, $\K_{\mathrm{C}^*}=\operatorname{Mod}(T_{\mathrm{C}^*})$.  (The authors only treat not necessarily unital \cstar-algebras, but one just adds a constant to name the identity with no additional complications.)

An operator system is a $^*$-closed subspace of $\B(H)$ that is closed in the operator norm topology.  The appropriate morphisms between operator systems are the unital completely positive linear maps.  There is a natural first-order language $\la_{\os}$ in which the class of operator systems is universally axiomatizable; see \cite[Subsection 3.3]{6authors} and \cite[Appendix B]{KEP}.  Since the operator system structure on a C$^*$-algebra is uniformly quantifier-free definable, we may assume that $\la_{\os}\subseteq \la_{\mathrm{C}^*}$.  For a \cstar-algebra $A$, we let $A|\la_{\os}$ denote the reduct of $A$ to $\la_{\os}$, which simply means that we view $A$ merely as an operator system rather than as a \cstar-algebra.  Set $\K_{\mathrm{C}^*}|\la_{\os}:=\{A|\la_{\os} \ : \ A\in \K_{\mathrm{C}^*}\}$.  In \cite{GL}, the following question was raised:  is $\K_{\mathrm{C}^*}|\la_{\os}$ an elementary class?  The main result of this note is to give an affirmative answer to this question.


We first need a lemma, which is nearly identical to \cite[Theorem 6.1]{BN}.  Some notation:  for a \cstar-algebra $B$ and $x,y,z,b\in B$, let $\varphi(x,y,z,b)$ be the $\la_{\os}$-formula 
$$\left | \left \| \begin{matrix} 0 & y & 1 & 0 \\ 2\cdot 1 & x & z & b \end{matrix}\right\|^2-\left \|\begin{matrix} 2\cdot 1 & x & z & b\end{matrix}\right\|^2\right |$$

\begin{lemma}\label{mainlemma}
Suppose that $A$ is a subsystem of the unital \cstar-algebra $B$.  Then $A$ is closed under products if and only if we have
$$\sup_{x,y\in A_1}\inf_{z\in A_1}\sup_{b\in B_{2}}\varphi(x,y,z,b)=0.$$
\end{lemma}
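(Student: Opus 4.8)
The plan is to prove the two implications separately, concentrating essentially all of the content into one norm estimate obtained by a judicious choice of $b$. Write $M=\left(\begin{smallmatrix} 0 & y & 1 & 0 \\ 2\cdot 1 & x & z & b\end{smallmatrix}\right)$, so that $\varphi(x,y,z,b)$ is the absolute difference of $\|M\|^2=\|MM^*\|$ and the squared norm of the second row of $M$. A direct computation gives
\begin{equation*}
MM^*=\begin{pmatrix} 1+yy^* & z^*+yx^* \\ z+xy^* & 4\cdot 1+xx^*+zz^*+bb^*\end{pmatrix}\in M_2(B),
\end{equation*}
and the squared norm of the second row of $M$ is exactly the norm of the $(2,2)$-entry $4\cdot 1+xx^*+zz^*+bb^*$ of $MM^*$; since this entry is a compression of the positive operator $MM^*$, we always have $0\le\varphi(x,y,z,b)=\|MM^*\|-\|4\cdot 1+xx^*+zz^*+bb^*\|$.

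I would first handle the easy implication. If $A$ is closed under products, fix $x,y\in A_1$ and set $z:=-xy^*$; since $A$ is a $^*$-closed subspace closed under multiplication, $z\in A$, and $\|z\|\le\|x\|\,\|y\|\le 1$, so $z\in A_1$. For this $z$ the off-diagonal entries of $MM^*$ vanish, so $MM^*$ is block diagonal; the $(1,1)$-entry $1+yy^*$ has norm $1+\|y\|^2\le 2$, while the $(2,2)$-entry is $\ge 4\cdot 1$ and hence has norm $\ge 4$, so $\|MM^*\|$ equals the norm of the $(2,2)$-entry. Thus $\varphi(x,y,z,b)=0$ for every $b\in B_2$, and the displayed supremum is $0$. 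For the converse, suppose that supremum is $0$ and fix $x,y\in A_1$; there are then $z_n\in A_1$ with $\sup_{b\in B_2}\varphi(x,y,z_n,b)\to 0$. The plan is to prove that there is an absolute constant $C$ with
\begin{equation*}
\sup_{b\in B_2}\varphi(x,y,z,b)\ \ge\ \tfrac1C\,\|z+xy^*\|^2
\end{equation*}
for all $x,y,z$ in the unit ball of $B$. Granting this, $\|z_n+xy^*\|\to 0$, so $-xy^*=\lim_n z_n$ belongs to $A$ because $A$ is norm closed; hence $xy^*\in A$ for all $x,y\in A_1$, and so for all $x,y\in A$ by homogeneity, and replacing $y$ by $y^*$ (allowed since $A$ is $^*$-closed) yields $xy\in A$. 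Thus $A$ is closed under products, as desired.

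The only real obstacle is the displayed inequality, and here the key idea---essentially \cite[Theorem 6.1]{BN}---is to use $b$ to force the $(2,2)$-entry of $MM^*$ to be a scalar. I would take $b:=(2\cdot 1-xx^*-zz^*)^{1/2}$, which is a well-defined positive element of the \cstar-algebra $B$ because $0\le xx^*+zz^*\le 2\cdot 1$, and which satisfies $\|b\|\le\sqrt2\le 2$, so $b\in B_2$. For this $b$ the $(2,2)$-entry of $MM^*$ equals $6\cdot 1$, so, writing $w:=z+xy^*$ and $r:=\|MM^*\|$, we get $\varphi(x,y,z,b)=r-6\ge 0$. Now $MM^*\le r\cdot 1$ in $M_2(B)$, so $r\cdot 1-MM^*$ is a positive element of $M_2(B)$ whose $(1,1)$-entry has norm $\le r$ and whose $(2,2)$-entry equals $(r-6)\cdot 1$; since every positive block matrix $\left(\begin{smallmatrix} C & D^* \\ D & E\end{smallmatrix}\right)$ satisfies $\|D\|^2\le\|C\|\,\|E\|$ (compress each summand to a one-dimensional subspace and use positivity of the resulting $2\times 2$ scalar matrix), this gives $\|w\|^2\le r(r-6)$. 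Finally $r=\|MM^*\|\le\max\bigl(\|1+yy^*\|,6\bigr)+\|w\|\le 6+2=8$, so $\|w\|^2\le 8(r-6)=8\,\varphi(x,y,z,b)\le 8\sup_{b'\in B_2}\varphi(x,y,z,b')$, which is the claim with $C=8$ and completes the proof.
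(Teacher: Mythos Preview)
Your proof is correct and follows essentially the same approach as the paper: both arguments choose $b$ so that the $(2,2)$-entry of $MM^*$ becomes a scalar, then extract a bound on $\|z+xy^*\|$ from the resulting $2\times 2$ block structure. The only cosmetic differences are that the paper takes $b=(\|xx^*+zz^*\|\cdot 1-xx^*-zz^*)^{1/2}$ (making the $(2,2)$-entry $(4+\|xx^*+zz^*\|)\cdot 1$) and bounds $\|xy^*+z\|$ via the row-norm lower bound $\|MM^*\|\ge(\|xy^*+z\|^2+(4+\|xx^*+zz^*\|)^2)^{1/2}$, whereas you take $b=(2\cdot 1-xx^*-zz^*)^{1/2}$ and use the off-diagonal estimate $\|D\|^2\le\|C\|\,\|E\|$ for the positive matrix $r\cdot 1-MM^*$; both routes yield $\|z+xy^*\|^2\lesssim\varphi$ with comparable constants.
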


\begin{proof} Fix $\e\in (0,1)$ and $x,y\in A_1$. Choose $z\in A_1$ such that $$\sup_{b\in B_{2}} \varphi(x,y,z,b) < \e$$ and let $b$ be the square root of $\|xx^* + zz^*\|\cdot 1 - xx^* - zz^*\in B$ so that $\|b\|^2 = \|b^2\|\leq \|xx^* + zz^*\|\leq 2$. Multiplying $[\begin{matrix} 2\cdot 1 & x & z & b\end{matrix}]$ by its adjoint have that $$\left \|\begin{matrix} 2\cdot 1 & x & z & b\end{matrix}\right\|^2 = 4\cdot 1 + xx^* + zz^* + bb^* = (4 + \|xx^* + zz^*\|)\cdot 1.$$ Similarly, it holds that $$\left \| \begin{matrix} 0 & y & 1 & 0 \\ 2\cdot 1 & x & z & b \end{matrix}\right\|^2 = \left\| \begin{matrix} 1 + yy^* & yx^* + z^*\\ xy^* + z & (4 + \|xx^* + zz^*\|)\cdot 1\end{matrix} \right\|.$$ Examining the second row, it follows that the norm of the right side is at least $(\|xy^* + z\|^2 + (4 + \|xx^* + zz^*\|)^2)^{1/2}$, whence $\|xy^* + z\|\leq 4\sqrt\e$. As $A$ is complete, it follows that $xy^*\in A$.

Conversely, the above calculations show that if $A$ is multiplicatively closed, then setting $z = -xy^*$ suffices.
\end{proof}

\begin{thm} Let $B$ be a \cstar-algebra. If $A\subset B$ is a subsystem of $B$ which is an elementary substructure in the language of operator systems, then $A$ is a \cstar-subalgebra of $B$.
\end{thm}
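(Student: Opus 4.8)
The plan is to reduce the statement to Lemma~\ref{mainlemma} by showing that the quantity
$$
\sup_{x,y\in A_1}\inf_{z\in A_1}\sup_{b\in B_2}\varphi(x,y,z,b)
$$
vanishes, given that $A\preceq B|\la_{\os}$ and that $B$ itself is a \cstar-algebra. The point is that this expression, read inside $B$, involves the norm on matrices over $B$ (hence over $A$, since the operator-system matrix norms are quantifier-free definable and $A\subseteq B$), but the parameter $b$ ranges over $B_2$ rather than $A_2$. So the obstacle is that the innermost $\sup$ over $b$ is taken in the ambient algebra, not in $A$, and elementarity of $A$ in $B$ only gives us control over quantities computed using elements of $A$.

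To get around this, I would first observe that $B$ itself is multiplicatively closed (it is a \cstar-algebra), so by Lemma~\ref{mainlemma} applied to $A:=B$ we have $\sup_{x,y\in B_1}\inf_{z\in B_1}\sup_{b\in B_2}\varphi(x,y,z,b)=0$. Now the key move: examine the proof of Lemma~\ref{mainlemma}. In the nontrivial direction one takes, for given $x,y$, the element $z=-xy^*$, and then $\varphi(x,y,z,b)=0$ for \emph{every} $b\in B_2$ (the two displayed norms are literally equal when $z=-xy^*$, since $xy^*+z=0$ kills the off-diagonal terms and the remaining $1+yy^*$ block is dominated by the $(4+\|xx^*+zz^*\|)\cdot 1$ block). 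Thus for $x,y\in B_1$ with $z=-xy^*\in B_1$ we actually have $\sup_{b\in B_2}\varphi(x,y,z,b)=0$. In particular, restricting to $x,y\in A_1$: since $A$ is norm-closed and $\|xy^*\|\le 1$, the witness $z=-xy^*$ lies in $A_1$ (this uses that $A$ would need to be shown closed under products — but that is exactly what we are proving, so instead I should phrase it as: the formula $\sup_{b\in B_2}\varphi(x,y,z,b)$, as a function of $(x,y,z)$ ranging over $B_1^3$, is $0$ whenever $z=-xy^*$, hence $\inf_{z\in B_1}\sup_{b\in B_2}\varphi(x,y,z,b)=0$ for all $x,y\in B_1$).

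The cleanest route, then, is: the $\la_{\os}$-condition
$$
\sup_{x,y}\inf_{z}\sup_{b}\varphi(x,y,z,b)=0,
$$
with all of $x,y,z,b$ ranging over balls \emph{of the structure itself}, is a sentence in $\la_{\os}$ that holds in $B$ by Lemma~\ref{mainlemma} (direction: $B$ is multiplicatively closed). Since $A\preceq B|\la_{\os}$, the same sentence holds in $A$: that is,
$$
\sup_{x,y\in A_1}\inf_{z\in A_1}\sup_{b\in A_2}\varphi(x,y,z,b)=0.
$$
This is almost the hypothesis of Lemma~\ref{mainlemma} for the pair $A\subseteq B$, except the inner $\sup$ is over $A_2$ rather than $B_2$ — which is even stronger in the wrong direction. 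To bridge this, I use that the specific $b$ produced in the proof of Lemma~\ref{mainlemma} (a square root of $\|xx^*+zz^*\|\cdot 1 - xx^* - zz^*$ for $x,z\in A$) lies in $A$: indeed $A$, being an elementary substructure, is in particular closed under the continuous functional calculus applied to positive elements — or more elementarily, the relevant positive element $\|xx^*+zz^*\|\cdot 1 - xx^*-zz^*$ and its square root are approximable by polynomials in Hermitian elements of $A$, all computed inside $B$ where $A$'s operator-system structure already determines these norms. So the witnessing $b$ can be taken in $A_2$, and therefore the computation in the first paragraph of the proof of Lemma~\ref{mainlemma} goes through verbatim with $B$ replaced by $A$, yielding $xy^*\in A$ for all $x,y\in A_1$, and hence (by scaling) that $A$ is closed under products. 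Combined with $A$ being a $^*$-closed norm-closed subspace containing $1$, this makes $A$ a \cstar-subalgebra of $B$.

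I expect the main obstacle to be the bookkeeping in the previous paragraph: justifying that the auxiliary element $b$ (a square root of a positive element of $A$) belongs to $A$. The honest fix is to note that $A\preceq B|\la_{\os}$ forces $A$ to be closed under all $\la_{\os}$-definable operations, and square roots of positive elements of bounded norm are a definable predicate in $\la_{\os}$ (being a norm-limit of polynomials in the given Hermitian element with a uniform modulus); alternatively, one reruns Lemma~\ref{mainlemma}'s argument using a definable-predicate version of $\varphi$ in which $b$ has been eliminated in favor of $z$ alone. Either way the substance is entirely in Lemma~\ref{mainlemma}; the theorem is the observation that its hypothesis is an $\la_{\os}$-elementary property, hence passes to elementary substructures.
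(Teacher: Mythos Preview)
Your argument has a genuine gap at exactly the step you flag as ``the main obstacle.'' You correctly use elementarity once to pass from
\[
\inf_{z\in B_1}\sup_{b\in B_2}\varphi(x,y,z,b)=0
\quad\text{to}\quad
\inf_{z\in A_1}\sup_{b\in A_2}\varphi(x,y,z,b)=0,
\]
and you correctly note that the remaining difficulty is upgrading the inner $\sup_{b\in A_2}$ to $\sup_{b\in B_2}$. But your proposed fix is circular: the special $b$ from the proof of Lemma~\ref{mainlemma} is the square root of $\|xx^*+zz^*\|\cdot 1 - xx^* - zz^*$, and this element is built from the \emph{products} $xx^*$ and $zz^*$. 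You do not yet know those products lie in $A$; that is precisely the conclusion you are trying to establish. Saying that square roots are $\la_{\os}$-definable does not help, because the input to the square root is not known to be in $A$ either. The alternative you mention (``eliminate $b$ in favor of $z$ alone'') is not carried out, and it is not clear how to do it without the same circularity.

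The paper closes this gap with a second, entirely elementary, use of elementarity. Once you have chosen $z\in A_1$ with $\sup_{b\in A_2}\varphi(x,y,z,b)\leq\epsilon$, observe that $\sup_b\varphi(x,y,z,b)$ is an $\la_{\os}$-formula with parameters $x,y,z\in A$. Since $A\preceq B|\la_{\os}$, its value in $A$ equals its value in $B$, so $\sup_{b\in B_2}\varphi(x,y,z,b)\leq\epsilon$ as well. This gives $\inf_{z\in A_1}\sup_{b\in B_2}\varphi(x,y,z,b)=0$ directly, and Lemma~\ref{mainlemma} finishes the job. No square roots, no functional calculus, no appeal to definable operations beyond the formula $\varphi$ itself.
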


\begin{proof} Since $A$ inherits the unit of $B$ and $A$ is self-adjoint, we need only check that $A$ is closed under products, that is, we need only verify the condition of the previous lemma.  Fixing $x,y\in A_1$,  we have
$$\inf_{z\in B_1}\sup_{b\in B_{2}}\varphi(x,y,z,b)=0.$$  Since $A$ is an elementary substructure of $B$, we have
$$\inf_{z\in A_1}\sup_{b\in A_{2}}\varphi(x,y,z,b)=0.$$  Fix $\epsilon>0$ and take $z\in A_1$ such that
$$\sup_{b\in A_{2}}\varphi(x,y,z,b)\leq \epsilon.$$  By elementarity again, we have
$$\sup_{b\in B_{2}}\varphi(x,y,z,b)\leq \epsilon,$$ whence we have
$$\inf_{z\in A_1}\sup_{b\in B_{2}}\varphi(x,y,z,b)=0,$$ which is what we desired.  
\end{proof}

\begin{cor}\label{maintheorem}
$\K_{\mathrm{C}^*}|\la_{\os}$ is an elementary class.
\end{cor}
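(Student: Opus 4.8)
The plan is to deduce this from the theorem proved just above together with the standard model-theoretic criterion (valid in continuous logic) that a class of $\la_{\os}$-structures closed under isomorphism is elementary if and only if it is closed under ultraproducts and under ultraroots, where closure under ultraroots means that whenever some ultrapower $A^{\mathcal{U}}$ of an $\la_{\os}$-structure $A$ lies in the class, so does $A$. Closure under isomorphism of $\K_{\mathrm{C}^*}|\la_{\os}$ is routine (one transports the \cstar-structure of a representative along the given operator system isomorphism, and that structure is uniquely determined by its $\la_{\os}$-reduct), so it remains to check the other two conditions; the preceding theorem is precisely what supplies the ultraroot condition.

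For closure under ultraproducts, let $(A_i)_{i\in I}$ be unital \cstar-algebras and let $\mathcal{U}$ be an ultrafilter on $I$. Since $\K_{\mathrm{C}^*}=\operatorname{Mod}(T_{\mathrm{C}^*})$ is an elementary class, the $\la_{\mathrm{C}^*}$-ultraproduct $\prod_{\mathcal{U}}A_i$ is again a unital \cstar-algebra, and because forming reducts commutes with forming ultraproducts, its $\la_{\os}$-reduct equals the $\la_{\os}$-ultraproduct $\prod_{\mathcal{U}}(A_i|\la_{\os})$. Hence $\prod_{\mathcal{U}}(A_i|\la_{\os})\in\K_{\mathrm{C}^*}|\la_{\os}$.

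For closure under ultraroots, suppose $A$ is an $\la_{\os}$-structure and $\mathcal{U}$ an ultrafilter with $A^{\mathcal{U}}\in\K_{\mathrm{C}^*}|\la_{\os}$, say $A^{\mathcal{U}}\cong B|\la_{\os}$ for a unital \cstar-algebra $B$. The diagonal embedding $A\hookrightarrow A^{\mathcal{U}}$ is $\la_{\os}$-elementary, so composing it with this isomorphism realizes $A$ as an $\la_{\os}$-elementary substructure of $B|\la_{\os}$; in particular, since operator systems form a universally axiomatized class, $A$ is itself an operator system, and it is an operator subsystem of $B$ that is an elementary substructure in the language of operator systems. By the theorem above, $A$ is a \cstar-subalgebra of $B$, and therefore $A\in\K_{\mathrm{C}^*}|\la_{\os}$.

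Invoking the criterion now yields the corollary. The only genuinely non-routine ingredient is the theorem above---equivalently Lemma \ref{mainlemma}, which encodes multiplicative closure of a subsystem as a single $\sup$--$\inf$--$\sup$ condition in $\la_{\os}$; the rest is the standard transfer between the two signatures and the general characterization of elementary classes, so I do not expect any further obstacle.
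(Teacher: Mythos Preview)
Your proof is correct and follows essentially the same route as the paper's: both invoke the semantic criterion (closure under isomorphism, ultraproducts, and ultraroots) and obtain the ultraroot step directly from the preceding theorem via the elementarity of the diagonal embedding. The only difference is that you spell out the ultraproduct step and the isomorphism step a bit more explicitly than the paper does.
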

\begin{proof}
We use the semantic test for axiomatizability, that is, we show that $\K_{\mathrm{C}^*}|\la_{\os}$ is closed under isomorphism, ultraproduct, and ultraroot.  (See \cite[Proposition 5.14]{bbhu}.)  Closure under isomorphism and ultraproducts is clear.  We thus only need to show that $\K_{\mathrm{C}^*}|\la_{\os}$ is closed under ultraroots.  So suppose that $A$ is an $\la_{\os}$-structure for which $A^\u\in \K_{\mathrm{C}^*}|\la_{\os}$.  Note that this implies that $A$ is an operator subsystem of $A^\u$, whence $A$ is an elementary substructure by {\L}os' theorem. Thus $A$ is a \cstar-subalgebra of $A^\u$.
\end{proof}


We recall the \emph{Beth Definability Theorem} (see, for example, \emph{the proof of }\cite[Theorem 3.5.1]{nuclear}):

\begin{fact}
Suppose that $T$ is a theory in a language $\la$ and $T\subseteq T'$ where $T'$ is a theory in a language $\la'$ with no new sorts.  Further suppose that, for any model $M$ of $T'$ and any automorphism $\sigma$ of $M|\la$, $\sigma$ is also an automorphism of $M$.  Then every predicate in $\la'$ is $T'$-equivalent to a definable predicate in $\la$, that is, there is a sequence $\varphi_n(\vec x)$ of $\la$-formulae such that, for all $A\in \operatorname{Mod}(T')$ and all $\vec a\in A$, we have that $\varphi_n^A(\vec a)$ converges uniformly to $P^A(\vec a)$. 
\end{fact}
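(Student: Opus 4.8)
The plan is to deduce the statement from the standard correspondence, in continuous logic, between definable predicates modulo a theory and continuous functions on its type space, combined with the fact that reducts of sufficiently saturated and homogeneous structures remain saturated and homogeneous.

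\emph{Step 1: reduction to ``implicit definability''.} Modulo $T'$, the $\la$-formulas in $n$ free variables restrict to a point-separating family of continuous (suitably bounded) functions on the compact Hausdorff space $S_n^\la(T')$ of complete $\la$-types realized in models of $T'$; this family contains the constants and is closed under the continuous connectives, so by the continuous Stone--Weierstrass theorem its uniform closure is all of $C(S_n^\la(T'))$, which is exactly the set of definable predicates in $\la$ relative to $T'$ (cf.\ \cite{bbhu}). The predicate $P$ --- indeed any $\la'$-formula --- corresponds in the same way to a continuous function $\hat P$ on $S_n^{\la'}(T')$, and the restriction-of-type map $\rho\colon S_n^{\la'}(T')\to S_n^\la(T')$ is a continuous surjection of compact Hausdorff spaces, hence a closed map and in particular a topological quotient map. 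Thus it suffices to prove that $\hat P$ is constant on the fibers of $\rho$, i.e.\ that $P^M(\vec a)$ depends --- uniformly over all $M\models T'$ --- only on $\tp_\la^M(\vec a)$: then $\hat P$ descends to a continuous function $F$ on $S_n^\la(T')$, any sequence of $\la$-formulas converging uniformly to $F$ provides the desired $\varphi_n$, and for $M\models T'$ and $\vec a\in M$ one has $\varphi_n^M(\vec a)\to F\bigl(\tp_\la^M(\vec a)\bigr)=\hat P\bigl(\tp_{\la'}^M(\vec a)\bigr)=P^M(\vec a)$, uniformly in $(M,\vec a)$.

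\emph{Step 2: implicit definability from the automorphism hypothesis.} Fix a monster model $\M\models T'$ in the language $\la'$ (that is, $\kappa$-saturated and strongly $\kappa$-homogeneous for $\kappa$ large). The crucial observation is that the reduct $\M|\la$ is again $\kappa$-saturated and strongly $\kappa$-homogeneous \emph{as an $\la$-structure}: an $\la$-type over a small subset of $\M$ that is finitely satisfiable in $\M|\la$ is finitely satisfiable in $\M$ and hence realized there by saturation, and a standard back-and-forth then promotes this to strong homogeneity. Consequently, whenever $\vec a,\vec b\in\M$ satisfy $\tp_\la^{\M}(\vec a)=\tp_\la^{\M}(\vec b)$ there is $\sigma\in\Aut(\M|\la)$ with $\sigma(\vec a)=\vec b$; by the hypothesis of the Fact, $\sigma$ is an automorphism of $\M$, so $P^{\M}(\vec a)=P^{\M}(\vec b)$. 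Hence inside the monster model the value of $P$ on a tuple is determined by its $\la$-type. It then remains to propagate this from $\M$ to arbitrary models of $T'$ and to conclude the fiber-constancy of $\hat P$.

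\emph{The main obstacle.} The propagation in Step 2 is the delicate point --- and precisely the one carried out carefully in the proof of \cite[Theorem 3.5.1]{nuclear} --- because $T'$ need not be complete (in the present application $\la'=\la_{\mathrm{C}^*}$ and $T'=T_{\mathrm{C}^*}$), so two models of $T'$ witnessing a hypothetical failure of fiber-constancy need not be $\la'$-elementarily equivalent, and so cannot be placed together inside a single monster model. The standard remedy is to pass to the ``doubled'' language $\la^{+}$ formed from two disjoint copies of $\la'\setminus\la$ glued over a common copy of $\la$, to take a $\kappa$-saturated, strongly $\kappa$-homogeneous model $\D$ of the union of the two copies of $T'$, and --- using the reduct-homogeneity of $\D$ over $\la$ --- to build an $\la$-automorphism of one of the two $\la'$-reducts $\D|\la'$ of $\D$ that changes the $P$-value of a witnessing tuple, contradicting the hypothesis of the Fact applied to that reduct (which is a model of $T'$). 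In the operator-system setting this is in any case harmless: Lemma \ref{mainlemma} shows that the operator-system reduct of a \cstar-algebra already pins down its multiplication, so a \cstar-algebra is the unique expansion of its operator system to a model of $T_{\mathrm{C}^*}$, and this uniqueness of expansion is exactly what both verifies the hypothesis of the Fact and makes the propagation step immediate.
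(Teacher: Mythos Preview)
The paper does not prove this Fact at all; it is stated with a pointer to \emph{the proof of} \cite[Theorem 3.5.1]{nuclear}, so there is nothing in the paper to compare your argument against.

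On the substance of your proposal: Step~1 (the Stone--Weierstrass reduction to constancy on fibres of $\rho$) and the monster-model half of Step~2 are exactly right.  The doubled-language step, however, does not close the gap in the way you describe---and in fact it cannot, because the Fact \emph{as literally stated} is false.  Take $\la$ to be the bare language of a single metric sort, $\la'=\la\cup\{P\}$ with $P$ a $[0,1]$-valued unary predicate, and let $T'$ consist of the single axiom $\sup_{x,y}|P(x)-P(y)|=0$.  Every self-isometry of a model of $T'$ preserves a constant function, so the automorphism hypothesis holds in every model; yet $P$ is not an $\la$-definable predicate modulo $T'$, since two models with the same metric space as $\la$-reduct may carry any two distinct constant values of $P$.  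In your doubled model $\D$ this manifests as follows: you obtain $c$ with $P^{(1)}(c)\neq P^{(2)}(c)$, and the automorphism hypothesis applied to each reduct $\D|\la'^{(i)}$ tells you only that every $\la$-automorphism of $\D|\la$ preserves each $P^{(i)}$ separately---which is no contradiction.

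What the paper actually \emph{uses} (and what your final sentence already isolates) is the genuine Beth hypothesis: every $\la$-isomorphism between models of $T'$ is an $\la'$-isomorphism, equivalently each $\la$-structure has at most one expansion to a model of $T'$.  This is precisely the content of ``any complete order isomorphism between two \cstar-algebras is a $^*$-isomorphism'' (Corollary~\ref{forgetful}).  Under that hypothesis your doubled-model argument finishes cleanly: $\D|\la'^{(1)}$ and $\D|\la'^{(2)}$ are two expansions of the single $\la$-structure $\D|\la$ to models of $T'$, hence coincide, forcing $P^{(1)}(c)=P^{(2)}(c)$.  So your proof is correct for the corrected statement, and you have in effect already diagnosed both the flaw in the stated hypothesis and its irrelevance to the intended application.
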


Let $T_{\mathrm{C}^*,\os}$ be an $\la_{\os}$-theory such that $\K_{\mathrm{C}^*}|\la_{\os}=\operatorname{Mod}(T_{\mathrm{C}^*,\os})$.  A well-known consequence of Pisier's Linearization Trick (see Corollary \ref{forgetful} below for a short proof) implies that any complete order isomorphism between two \cstar-algebras is actually a ${}^*$-isomorphism.  We thus have:

\begin{cor}
There is an $\la_{\os}$-definable predicate $P$ such that, for any \cstar-algebra $A$ and any $x,y,z\in A_1$, we have
$$P^A(x,y,z)=d(x\cdot y,z).$$
\end{cor}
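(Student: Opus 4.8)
The plan is to deduce this from the Beth Definability Theorem quoted above, applied with $\la=\la_{\os}$, $\la'=\la_{\mathrm{C}^*}$, $T=T_{\mathrm{C}^*,\os}$, and $T'=T_{\mathrm{C}^*}$. First I would check that this is a legitimate instance of the theorem. The two languages involve the same sorts (the operator space/operator system unit balls), so $\la'$ introduces no new sorts over $\la$; and since $\K_{\mathrm{C}^*}|\la_{\os}=\operatorname{Mod}(T_{\mathrm{C}^*,\os})$ by Corollary \ref{maintheorem}, while the $\la_{\os}$-reduct of every model of $T_{\mathrm{C}^*}$ lies in $\K_{\mathrm{C}^*}|\la_{\os}$, we have $T'\vdash T$, which is the containment $T\subseteq T'$ required by the Fact.

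Next I would verify the substantive hypothesis of the Beth theorem: that every automorphism of $M|\la_{\os}$ is already an automorphism of $M$, for $M\models T'$. Unwinding definitions, $M$ is a unital \cstar-algebra $A$, an automorphism of $A|\la_{\os}$ is a surjective unital complete order isomorphism of $A$ onto itself, and an automorphism of $A$ is a $^*$-automorphism; so what must be checked is exactly that every unital complete order automorphism of a \cstar-algebra is a $^*$-automorphism. This is the consequence of Pisier's Linearization Trick mentioned just before the statement, with a short proof recorded in Corollary \ref{forgetful}. This is the one step that requires genuine operator-algebraic input rather than pure model theory, and so I expect it to be the main obstacle — though it is already done for us.

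With the hypotheses in hand, the Beth theorem yields that every predicate of $\la_{\mathrm{C}^*}$ is $T_{\mathrm{C}^*}$-equivalent to an $\la_{\os}$-definable predicate. I would then apply this to the particular $\la_{\mathrm{C}^*}$-definable predicate on the unit ball sort given by $(x,y,z)\mapsto d(x\cdot y,z)$, which is $\la_{\mathrm{C}^*}$-definable because multiplication is a function symbol of $\la_{\mathrm{C}^*}$ and $x\cdot y\in A_1$ whenever $x,y\in A_1$. The theorem produces a sequence of $\la_{\os}$-formulae $\varphi_n(x,y,z)$ converging uniformly, across all models of $T_{\mathrm{C}^*}$ — equivalently, across all unital \cstar-algebras — to $d(x\cdot y,z)$; the resulting limit definable predicate is the desired $P$. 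The only remaining bookkeeping is to note that the uniformity of convergence is over the entire class and that $P$ is an $\la_{\os}$-definable predicate, both of which are part of the conclusion of the Beth theorem as stated.
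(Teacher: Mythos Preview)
Your proposal is correct and follows essentially the same approach as the paper: apply the Beth Definability Theorem with $\la=\la_{\os}$, $\la'=\la_{\mathrm{C}^*}$, $T=T_{\mathrm{C}^*,\os}$, $T'=T_{\mathrm{C}^*}$, verifying the automorphism hypothesis via the Pisier Linearization Trick (Corollary~\ref{forgetful}), and then specialize to the $\la_{\mathrm{C}^*}$-definable predicate $d(x\cdot y,z)$. Your write-up is in fact more explicit than the paper's, which simply records the Beth theorem, notes the Pisier consequence, and declares the corollary.
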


We now consider the quantifier-complexity of an axiomatization for $\K_{\mathrm{C}^*}|\la_{\os}$.  Recall that if $X$ is an operator system and $u\in X$, then $u$ is called a unitary of $X$ if $u$ is a unitary of the \cstar-envelope $\mathrm{C}^*_e(X)$.

\begin{prop}
$\K_{\mathrm{C}^*}|\la_{\os}$ is not $\forall\exists$-axiomatizable.
\end{prop}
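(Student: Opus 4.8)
The plan is to produce an increasing chain $\mathcal{S}_0\subseteq\mathcal{S}_1\subseteq\cdots$ of operator systems, each lying in $\K_{\mathrm{C}^*}|\la_{\os}$, whose completed union does not. This suffices, because every $\forall\exists$-axiomatizable class of metric structures is closed under unions of chains: if each $\mathcal{S}_n$ satisfies a condition $\sup_{\bar x}\inf_{\bar y}\psi(\bar x,\bar y)=0$ with $\psi$ quantifier-free, then so does $\mathcal{S}:=\overline{\bigcup_n\mathcal{S}_n}$, since given $\bar a$ in $\mathcal{S}$ one approximates it inside some $\mathcal{S}_n$, chooses there an approximate witness for the inner infimum, and invokes uniform continuity of $\psi$. (This is the trivial half of the continuous analogue of the Chang--{\L}o\'s--Suszko theorem; see \cite{bbhu} for the background.) So it is enough to build such a chain with $\mathcal{S}\notin\K_{\mathrm{C}^*}|\la_{\os}$.

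I would build the chain on the state-space side. Let $K$ be the Poulsen simplex, the metrizable Choquet simplex whose extreme boundary $\partial_eK$ is dense in $K$, and present it classically as an inverse limit $K=\varprojlim(\Delta_n,\pi_n)$ of finite-dimensional simplices $\Delta_n$ along surjective affine maps $\pi_n\colon\Delta_{n+1}\to\Delta_n$. Put $\mathcal{S}_n:=A(\Delta_n)$, the space of continuous affine functions on $\Delta_n$; since $\Delta_n$ is a finite-dimensional simplex, $\mathcal{S}_n$ is, as an operator system, a finite-dimensional abelian C$^*$-algebra, so $\mathcal{S}_n\in\K_{\mathrm{C}^*}|\la_{\os}$. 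Composition with $\pi_n$ gives a unital map $\mathcal{S}_n\to\mathcal{S}_{n+1}$ which is a complete order embedding: $\pi_n$ is affine, so the map is completely positive (positivity into a function system being automatic), and $\pi_n$ is surjective, so the map reflects positivity at every matrix level. Thus the $\mathcal{S}_n$ form a genuine chain of $\la_{\os}$-structures, and a Hahn--Banach argument identifies $\mathcal{S}:=\overline{\bigcup_n\mathcal{S}_n}$ with $A(K)$: the pullbacks of the $\mathcal{S}_n$ separate points of $K$, so the restriction map $K=S(A(K))\to S(\mathcal{S})$ is an affine homeomorphism, which forces $\mathcal{S}=A(K)$.

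It remains to see that $A(K)\notin\K_{\mathrm{C}^*}|\la_{\os}$, and this is where the notion of unitary of an operator system (recalled above) enters. The C$^*$-envelope of the function system $A(K)$ is $C(\overline{\partial_eK})$, which is $C(K)$ since $\partial_eK$ is dense; hence a unitary of $A(K)$ is exactly a continuous affine function $u\colon K\to\bC$ with $|u|\equiv1$ on $K$. The image of such a $u$ is a convex subset of the unit circle, hence a single point, so every unitary of $A(K)$ is a scalar. But by the Russo--Dye theorem the closed unit ball of a unital C$^*$-algebra is the closed convex hull of its unitary group, so a C$^*$-algebra whose only unitaries are scalars is one-dimensional; as $A(K)$ is infinite-dimensional, it is not completely order isomorphic to a C$^*$-algebra. (Alternatively: $A(K)\cong C(Y)$ would make $K\cong\operatorname{Prob}(Y)$ a Bauer simplex, contradicting density of $\partial_eK$, and a function system that is a C$^*$-algebra is commutative; either route gives a contradiction.) Therefore $\K_{\mathrm{C}^*}|\la_{\os}$ is not closed under unions of chains, so it is not $\forall\exists$-axiomatizable. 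I expect the main obstacle to be precisely this last step — certifying that the limit object $A(K)$ genuinely fails to be a C$^*$-algebra, which rests on pinning down its C$^*$-envelope and controlling its unitaries; verifying that the $\mathcal{S}_n$ really form an increasing chain of $\la_{\os}$-structures with completion $A(K)$ is routine but should be carried out carefully.
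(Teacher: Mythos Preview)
Your argument is correct and takes a genuinely different route from the paper's.

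The paper argues abstractly via existentially closed models: if $\K_{\mathrm{C}^*}|\la_{\os}$ were $\forall\exists$-axiomatizable it would admit an existentially closed member $A$, and then any complete order embedding of $A$ into another \cstar-algebra would be existential, hence (by a fact from \cite{GL}) would preserve unitaries, hence (by Corollary~\ref{pisier}) would be a $^*$-homomorphism --- contradicting the existence of complete order embeddings of \cstar-algebras that are not multiplicative. Your argument instead uses the easy direction of the continuous Chang--\L o\'s--Suszko theorem and exhibits an explicit chain $A(\Delta_0)\subseteq A(\Delta_1)\subseteq\cdots$ of finite-dimensional commutative \cstar-algebras whose completed union is $A(K)$ for the Poulsen simplex $K$, and then rules out $A(K)\in\K_{\mathrm{C}^*}|\la_{\os}$ by computing that its only operator-system unitaries are scalars. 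Both proofs ultimately lean on unitaries, but in different places: the paper uses them to control morphisms out of a hypothetical e.c.\ object, while you use them to certify that a specific limit object is not a \cstar-algebra. Your approach is more hands-on and avoids invoking the existence of e.c.\ models and the result from \cite{GL} about existential embeddings; on the other hand it imports more outside machinery (the Lazar--Lindenstrauss presentation of the Poulsen simplex, the identification $C^*_e(A(K))=C(\overline{\partial_eK})$, and Russo--Dye), whereas the paper's argument stays entirely within the toolkit already developed in the note and its immediate references.
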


\begin{proof}
If $\K_{\mathrm{C}^*}|\la_{\os}$ were $\forall\exists$-axiomatizable, then there would be $A\in \K_{\mathrm{C}^*}|\la_{\os}$ that is existentially closed for $\K_{\mathrm{C}^*}|\la_{\os}$.  However, in \cite[Section 5]{GL}, it was observed that if $\phi:X\to Y$ is a complete order embedding that is also existential, then $\phi$ maps unitaries to unitaries.  Take a complete order embedding of $A$ into a \cstar-algebra $B$ that is not a $^*$-homomorphism (see, for example, \cite[Section 5]{GL}); since this embedding maps unitaries to unitaries (since $A$ is existentially closed for $\K$), this contradicts Corollary \ref{pisier}.
\end{proof}

\begin{rmk}
In \cite[Section 5]{GL}, it was observed that if the set of unitaries in operator systems is definable (uniformly over all operator systems), then $\K_{\mathrm{C}^*}|\la_{\os}$ is elementary.  (This is equivalent to the statement that if $(X_i)$ is a family of operator systems and $u$ is a unitary of $\prod_\u X_i$, then there are unitaries $u_i$ in $X_i$ such that $u=(u_i)^\bullet$.)  In fact, if unitaries in operator systems were definable, then the following axioms would axiomiatize $\K_{\mathrm{C}^*}|\la_{\os}$:
\begin{itemize}
\item[(i)] the universal axioms for operator systems;
\item[(ii)] $\sup_{x\in A_1}\inf_{u_1,\ldots,u_4\in U(A)} d(x,\frac{1}{2}(u_1+\cdots+u_4))=0.$
\item[(iii)] $$\sup_{u,v\in U(A)}\inf_{x\in A}d\left(\left(\begin{matrix} 1 & u & x \\ u^* & 1 & v \\ x^* & u^* & 1\end{matrix}\right),M_3(A)_+\right)=0.$$
\end{itemize}
To see this, first note that an operator system is a \cstar-algebra if and only if it is spanned by its unitaries and is closed under the product of two unitaries.  The axiom in (ii) implies that the unitaries span; moreover, in a \cstar-algebra, any self-adjoint element is an average of two unitaries, whence the axiom in (ii) is true in any \cstar-algebra.  Moreover, a result of Walter (\cite{walter}) shows that the matrix appearing in axiom (iii) is positive if and only if $x=uv$.

While we still do not know if the unitaries in operator systems are definable, the previous proposition shows that \emph{if} they are definable, then they are not definable by an $\inf$-predicate, for otherwise the above axioms would give an $\forall\exists$-axiomatization of $\K_{\mathrm{C}^*}|\la_{\os}$.  This latter statement would have the following consequence:  there is an inclusion $X\subseteq Y$ of operator systems and an element $x\in X$ such that the distance from $x$ to the unitaries in $X$ is larger than the distance from $x$ to the unitaries in $Y$.
\end{rmk}

\begin{prop}
$\K_{\mathrm{C}^*}|\la_{\os}$ is not $\exists\forall$-axiomatizable.
\end{prop}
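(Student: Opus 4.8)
I would argue by contradiction. Suppose $\K_{\mathrm{C}^*}|\la_{\os}=\operatorname{Mod}(T')$ for some $\la_{\os}$-theory $T'$ all of whose axioms have the form $\inf_{\vec x}\sup_{\vec y}\psi_c(\vec x,\vec y)\le 0$ with $\psi_c$ quantifier-free; since $\la_{\os}$ is separable we may take $T'$ countable. The first step is the standard ``separable certificate'' observation. Given a unital \cstar-algebra $A$, for each axiom $c$ and each $k\ge 1$ choose a tuple $\vec a_{c,k}$ from $A$ with $\sup_{\vec y\in A}\psi_c^A(\vec a_{c,k},\vec y)<1/k$, and let $W\subseteq A$ be the (separable) operator subsystem generated by all of these countably many tuples. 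Since quantifier-free formulas are absolute between a structure and its substructures, for any operator system $N$ with $W\subseteq N\subseteq A$ we get $\sup_{\vec y\in N}\psi_c^N(\vec a_{c,k},\vec y)\le\sup_{\vec y\in A}\psi_c^A(\vec a_{c,k},\vec y)<1/k$ for all $c,k$, hence $N\models T'$, i.e.\ $N\in\K_{\mathrm{C}^*}|\la_{\os}$. In other words: if $\K_{\mathrm{C}^*}|\la_{\os}$ is $\exists\forall$-axiomatizable, then every unital \cstar-algebra $A$ admits a separable operator subsystem $W$ such that every operator system lying between $W$ and $A$ is completely order isomorphic to a \cstar-algebra.

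I would contradict this with $A=C([0,1]^{\omega_1})$, a non-separable commutative unital \cstar-algebra. Let $W$ be a separable subsystem as above. Every continuous function on $[0,1]^{\omega_1}$ depends on only countably many coordinates (a classical fact), so $W\subseteq C([0,1]^S)$ for some countable $S\subseteq\omega_1$; choose $\alpha\in\omega_1\setminus S$, let $x\in A$ be the $\alpha$-th coordinate function, and set $N:=W+\bC x+\bC x^2$. Then $N$ is a norm-closed, $^*$-closed subspace of $A$ containing $1$, hence an operator subsystem of $A$ with $W\subseteq N\subseteq A$; it remains to check that $N\notin\K_{\mathrm{C}^*}|\la_{\os}$.

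For this I would pass to state spaces. Because $x$ uses a coordinate not used by any function in $W$, a routine product-measure argument shows that the restriction maps give an affine homeomorphism $S(N)\cong S(W)\times S(V)$, where $V=\bC 1+\bC x+\bC x^2$. Now $V$ is completely order isomorphic to the span of $1,s,s^2$ in $C([0,1])$ (with $s(r)=r$), so $S(V)$ is affinely homeomorphic to the convex hull $M$ of the arc $\{(r,r^2):0\le r\le 1\}$; since $M$ has infinitely many extreme points it is not a simplex. Picking an extreme point $\phi_0$ of $S(W)$, the closed face $\{\phi_0\}\times S(V)$ of $S(N)$ is affinely homeomorphic to $M$, so $S(N)$ has a closed face that is not a simplex and hence is itself not a simplex. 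On the other hand $N$, being a subsystem of a commutative \cstar-algebra, is a minimal operator system; if $N$ were completely order isomorphic to a \cstar-algebra $D$, then $D$ would be a minimal operator system, hence commutative, forcing $S(N)\cong S(D)$ to be a Bauer simplex. This contradiction shows $N\notin\K_{\mathrm{C}^*}|\la_{\os}$, and the proposition follows.

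The delicate points, as I see them, are the first step and the choice of $A$: one must isolate exactly the right consequence of $\exists\forall$-axiomatizability, and then produce a \cstar-algebra that is non-separable yet exhausted by separable operator subsystems which need not be \cstar-algebras, so that no separable subsystem can ``certify'' that every larger subsystem is a \cstar-algebra. By contrast, once $A=C([0,1]^{\omega_1})$ is in hand, adjoining a single new coordinate together with its square is an economical way to build, over any separable $W$, an intermediate subsystem whose state space is visibly not a simplex; the remaining ingredients (absoluteness of quantifier-free formulas, the product-measure identification of $S(N)$, and the fact that a minimal operator system that is a \cstar-algebra is commutative) are routine.
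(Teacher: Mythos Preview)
Your argument is correct and follows a genuinely different path from the paper's. The paper argues as follows: fix a \cstar-algebra $A$ and an operator system $X$ with $A\subseteq X\subseteq A^\u$ that is \emph{not} (completely order isomorphic to) a \cstar-algebra; then for any $\exists\forall$-axiom, the existential witnesses from $A$ persist to $A^\u$ by elementarity of the diagonal embedding, and the inner $\sup$ can only decrease when passing from $A^\u$ to the subsystem $X$, so every axiom holds in $X$ --- a contradiction. Your proof replaces the ultrapower sandwich by a direct construction inside $A=C([0,1]^{\omega_1})$: you use countability of the language to collect separable witness data into $W$, and then the same monotonicity of $\sup_{\vec y}\psi_c$ shows every intermediate $W\subseteq N\subseteq A$ satisfies the axioms.

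The real difference is in the second half. The paper simply \emph{asserts} the existence of an intermediate $X$ that is not a \cstar-algebra; you actually build one, $N=W+\bC x+\bC x^2$, and verify via Choquet theory that $S(N)\cong S(W)\times S(V)$ is not a simplex (using the product-measure splitting afforded by the fresh coordinate $\alpha$), while any operator system completely order isomorphic to a \cstar-algebra and sitting inside a commutative \cstar-algebra would have to be commutative, hence have simplicial state space. This makes your argument fully self-contained, at the cost of some length; conversely, the paper's version is shorter and avoids Choquet theory, but the existence of the sandwich $A\subseteq X\subseteq A^\u$ with $X\notin\K_{\mathrm{C}^*}|\la_{\os}$ ultimately requires a verification very much like the one you give.
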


\begin{proof}
Fix a \cstar-algebra $A$ and an operator system $X$ that is \textbf{not} a \cstar-algebra with $A\subseteq X\subseteq A^\u$.  Suppose, towards a contradiction, that $\K_{\mathrm{C}^*}|\la_{\os}$ is $\exists\forall$-axiomatizable and let $\sigma:=\inf_x\sup_y\varphi(x,y)$ be such an axiom.  Fix $\epsilon>0$ and take $a\in A$ such that $(\sup_y\varphi(a,y))^A\leq \epsilon$.  It follows that $(\sup_y \varphi(a,y))^{A^\u}\leq \epsilon$, whence $(\sup_y \varphi(a,y))^X\leq \epsilon$.  Since $a\in X$ and $\epsilon>0$ was arbitrary, we see that $\sigma^X=0$.  Since $\sigma$ was an arbitrary axiom, we see that $X$ is a \cstar-algebra, yielding a contradiction.
\end{proof}

To obtain some positive results, it is convenient to expand our language.  For each $n\geq 1$, let $\psi_n(u)$ denote the $\la_{\os}$-formula
$$\inf_{\|x\|\leq 1}\left(\min\left\{\|\left[ \begin{matrix} u\otimes 1_n & x\end{matrix}\right]\|^2, \ \left\|\left[ \begin{matrix}u\otimes 1_n \\ x\end{matrix}\right]\right\|^2\right\}-\|x\|^2\right).$$  It was observed in \cite{GL} that if $X$ is an operator system and $u\in X$ is a contraction, then $u$ is a unitary of $X$ if and only if $\psi_n(u)^X=2$ for all $n\geq 1$.  Let $\la_{\os}^\#$ be the language obtained from $\la_{\os}$ obtained by adding new unary predicates $Q_n$ for $n\geq 1$.  Let $T_{\mathrm{C}^*,\os}^\#$ be the $\la^\#$-theory obtained from $T_{\mathrm{C}^*,\os}$ by adding, for each $n\geq 1$, the axiom
$$\sup_u|Q_n(u)-\psi_n(u)|=0.$$

\begin{prop}
$T_{\mathrm{C}^*,\os}^\#$ is $\forall\exists$-axiomatizable.
\end{prop}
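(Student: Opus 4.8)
The plan is to use the continuous form of the Chang--{\L}os--Suszko theorem: a theory is $\forall\exists$-axiomatizable if and only if its class of models is closed under unions of chains (and $\operatorname{Mod}(T_{\mathrm{C}^*,\os}^\#)$ is automatically elementary, being the model class of a theory). So I would take a chain $(A_i)_{i\in I}$ of models of $T_{\mathrm{C}^*,\os}^\#$, i.e.\ a directed family with $\la_{\os}^\#$-substructure embeddings $\iota_{ij}\colon A_i\hookrightarrow A_j$ for $i\le j$, and form the union $A=\overline{\bigcup_iA_i}$. The predicates $Q_n^{A_i}$ glue to a single uniformly continuous predicate $Q_n^A$ on $A$ (the modulus of continuity of $Q_n^{A_i}$ is forced to be that of $\psi_n$, which does not depend on $i$), so $A$ is naturally an $\la_{\os}^\#$-structure. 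It then remains to check that $A\models T_{\mathrm{C}^*,\os}^\#$, which amounts to two things: (a) the reduct $A|\la_{\os}$ lies in $\K_{\mathrm{C}^*}|\la_{\os}$, and (b) $Q_n^A=\psi_n^A$ for every $n\ge1$.

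The crux---and what I expect to be the main obstacle---is establishing (a) by showing that each connecting map $\iota_{ij}$ is automatically a $^*$-homomorphism; this fails for arbitrary complete order embeddings between \cstar-algebras, but the presence of the $Q_n$ rigidifies matters. Here is the argument I have in mind. Since $\iota_{ij}$ is an $\la_{\os}^\#$-embedding it preserves every $Q_n$, and because $A_i$ and $A_j$ both satisfy $\sup_u|Q_n(u)-\psi_n(u)|=0$, it therefore preserves the value of $\psi_n$ for all $n$. By the characterization recalled just before the proposition (from \cite{GL}), a contraction $u$ is a unitary of an operator system exactly when $\psi_n(u)=2$ for all $n$; hence $\iota_{ij}$ sends unitaries of $A_i$ to unitaries of $A_j$ (the images are contractions since $\iota_{ij}$ is isometric). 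As a complete order embedding, $\iota_{ij}$ also preserves positivity of matrices. Now for unitaries $u,v$ of $A_i$, Walter's theorem (invoked in the preceding Remark) says their product $uv$ is the unique element $x$ of $A_i$ for which the associated $3\times3$ matrix lies in $M_3(A_i)_+$; pushing that matrix forward and applying Walter's theorem in $A_j$---legitimate since $\iota_{ij}u,\iota_{ij}v$ are unitaries there---yields $\iota_{ij}(uv)=(\iota_{ij}u)(\iota_{ij}v)$. Since $\iota_{ij}$ is $\mathbb C$-linear and $^*$-preserving, and every \cstar-algebra is the linear span of its unitaries, this makes $\iota_{ij}$ multiplicative, hence a $^*$-homomorphism. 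Consequently $(A_i)$ is a direct system of \cstar-algebras along injective unital $^*$-homomorphisms, so $A=\overline{\bigcup_iA_i}$ is their \cstar-algebraic inductive limit; in particular $A$ is a unital \cstar-algebra and $A|\la_{\os}\in\K_{\mathrm{C}^*}|\la_{\os}$, giving (a).

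For (b), since $Q_n^A$ and $\psi_n^A$ are uniformly continuous it suffices to prove equality on the dense set $\bigcup_iA_i$. Fix $u\in A_i$; then $Q_n^A(u)=Q_n^{A_i}(u)=\psi_n^{A_i}(u)$, so I must show $\psi_n^{A_i}(u)=\psi_n^A(u)$. The inequality $\psi_n^A(u)\le\psi_n^{A_i}(u)$ is immediate, as $\psi_n$ is an infimum and $A_i$ is a sub(operator-)system of $A$. For the reverse inequality, the infimum defining $\psi_n^A(u)$ is taken over a bounded domain of $A$ on which $\bigcup_{j\ge i}A_j$ is dense, and the quantity being infimized is quantifier-free, hence continuous and absolute between a substructure and a superstructure; therefore $\psi_n^A(u)=\inf_{j\ge i}\psi_n^{A_j}(u)$. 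But $\iota_{ij}$ preserves $Q_n$, so for every $j\ge i$ we have $\psi_n^{A_j}(u)=Q_n^{A_j}(u)=Q_n^{A_i}(u)=\psi_n^{A_i}(u)$, whence $\psi_n^A(u)=\psi_n^{A_i}(u)$, as needed. Thus $A\models\sup_u|Q_n(u)-\psi_n(u)|=0$ for all $n$, so $A\models T_{\mathrm{C}^*,\os}^\#$, and $\operatorname{Mod}(T_{\mathrm{C}^*,\os}^\#)$ is closed under unions of chains. Apart from the rigidity of the $\iota_{ij}$, the remaining work is routine bookkeeping: tracking which subexpressions are quantifier-free (so as to transfer them between structures) and the uniform-continuity estimates needed to glue the $Q_n$ and to run the density argument.
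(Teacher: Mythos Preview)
Your proof is correct and follows the same overall strategy as the paper: apply the Chang--{\L}os--Suszko test by showing the model class is closed under unions of chains, the crux being that the connecting maps, as unital complete order embeddings preserving the $Q_n$ (hence the $\psi_n$), send unitaries to unitaries and are therefore $^*$-homomorphisms. The only substantive difference is the tool used for that last implication: the paper invokes Corollary~\ref{pisier} (a u.c.p.\ map between \cstar-algebras that preserves unitaries is a $^*$-homomorphism, via Stinespring/Pisier), whereas you deduce multiplicativity on pairs of unitaries from Walter's $3\times3$ positivity criterion and then extend by linearity using that unitaries span. Both routes are valid; the paper's is a one-liner given that Corollary~\ref{pisier} is already available in the appendix, while yours trades the Stinespring machinery for Walter's theorem. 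Your treatment of part~(b) is also more careful than the paper's, which simply declares the extra axioms ``readily verified'' in the union.
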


\begin{proof}
If $A_1\subseteq A_2\subseteq A_3\subseteq \cdots$ is an ascending sequence of models of $T_{\mathrm{C}^*,\os}^\#$, then each $A_m$ is a \cstar-algebra and the inclusions are complete order embeddings that preserve each predicate $P_n$, whence map unitaries to unitaries.  It follows from Corollary \ref{pisier} that the inclusions are $*$-homomorphisms, whence the union is a \cstar-algebra and hence a model of $T_{\mathrm{C}^*,\os}$.  It is readily verified that the axioms of $T_{\mathrm{C}^*,\os}^\#$ not in $T_{\mathrm{C}^*,\os}$ also hold in the union, whence the union is a model of $T_{\mathrm{C}^*,\os}^\#$.
\end{proof}

In classical logic, the next corollary would be immediate.  In the continuous case, a few words are in order.

\begin{cor}
$T_{\mathrm{C}^*,\os}$ is $\forall\exists\forall$-axiomatizable.
\end{cor}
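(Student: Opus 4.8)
The idea is to push the $\forall\exists$-axiomatization of $T_{\mathrm{C}^*,\os}^\#$ furnished by the previous proposition down to the sublanguage $\la_{\os}$: substituting the defining $\inf$-formulas $\psi_n$ for the predicates $Q_n$ and then floating the resulting infima and suprema outward converts each $\forall\exists$-axiom of $T_{\mathrm{C}^*,\os}^\#$ into a $\forall\exists\forall$ (that is, $\sup\inf\sup$) $\la_{\os}$-sentence, and the collection of all these sentences will axiomatize $T_{\mathrm{C}^*,\os}$.

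First I would arrange that $T_{\mathrm{C}^*,\os}^\#$ has a $\forall\exists$-axiomatization by conditions ``$\sup_{\bar x}\inf_{\bar y}\chi(\bar x,\bar y)=0$'' in which each $\chi$ is a \emph{piecewise-linear} connective --- a finite $\min$ of finite $\max$es of affine functions --- of atomic $\la_{\os}$-formulas and atomic formulas $Q_{n_j}(t_j(\bar x,\bar y))$. This is routine: starting from the axiomatization given by the previous proposition, replace the connective in each axiom $\sigma=\sup_{\bar x}\inf_{\bar y}\chi$ by a finite $\min$ of finite $\max$es of affine functions agreeing with it to within $\delta$ on the relevant compact box (such functions are uniformly dense, being a sublattice containing the constants and separating points), obtaining $\sigma_\delta=\sup_{\bar x}\inf_{\bar y}\chi_\delta$, and use the conditions ``$\sigma_\delta\dotminus\delta=0$'' for all $\delta>0$; since $|\sigma^A-\sigma_\delta^A|\le\delta$ for every $A$ this family still has exactly the models of $T_{\mathrm{C}^*,\os}^\#$, and $t\mapsto t\dotminus\delta$ composed with such a connective is again of the same form. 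Recall also that over $T_{\mathrm{C}^*,\os}^\#$ one has $Q_{n_j}(t_j)=\psi_{n_j}(t_j)=\inf_{\|x_j\|\le 1}F_{n_j}(t_j,x_j)$ with $F_{n_j}$ quantifier-free, and that every model of $T_{\mathrm{C}^*,\os}$ has a unique expansion to a model of $T_{\mathrm{C}^*,\os}^\#$, namely the one interpreting $Q_n$ by $\psi_n$.

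Now fix such an axiom $\sup_{\bar x}\inf_{\bar y}\chi(\bar x,\bar y)$ and substitute $\psi_{n_j}(t_j)$ for each occurrence of a $Q_{n_j}(t_j)$, using a fresh variable $x_j$ for each occurrence, so that $\chi$ becomes a piecewise-linear connective of atomic $\la_{\os}$-formulas and of the quantities $\inf_{\|x_j\|\le 1}F_{n_j}(t_j,x_j)$. An affine function distributes over $\inf_{\|x_j\|\le 1}F_{n_j}(t_j,x_j)$ one coordinate at a time, letting an $\inf_{x_j}$ out through each nonnegative coefficient and a $\sup_{x_j}$ out through each negative one; combining this with the distributivity identities $\max(\inf_a P,c)=\inf_a\max(P,c)$, $\min(\inf_a P,c)=\inf_a\min(P,c)$, $\max(\sup_b Q,c)=\sup_b\max(Q,c)$, $\min(\sup_b Q,c)=\sup_b\min(Q,c)$ (valid whenever $a,b$ do not occur in $c$) lets one float all of the infima and suprema, together with the finite $\min$es and $\max$es of the connective, out to the front, producing an $\la_{\os}$-formula of the form $\inf_{\bar z}\sup_{\bar w}\theta(\bar x,\bar y,\bar z,\bar w)$ with $\theta$ quantifier-free that agrees with $\chi$ on every model of $T_{\mathrm{C}^*,\os}^\#$. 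Absorbing $\inf_{\bar z}$ into $\inf_{\bar y}$ turns the axiom into a $\forall\exists\forall$ $\la_{\os}$-sentence $\tau$ that is $T_{\mathrm{C}^*,\os}^\#$-equivalent to it. Letting $\Sigma$ be the set of all these $\tau$, one checks exactly as in classical logic that $\Sigma$ axiomatizes $T_{\mathrm{C}^*,\os}$: if $A\models T_{\mathrm{C}^*,\os}$ then the expansion $A^\#$ models $T_{\mathrm{C}^*,\os}^\#$, so $\tau^A=\tau^{A^\#}=0$; conversely if $A\models\Sigma$, then interpreting $Q_n$ by $\psi_n$ gives an expansion $A^\#$ on which each $\tau$, hence each axiom of $T_{\mathrm{C}^*,\os}^\#$, vanishes (the defining axioms for the $Q_n$ holding by construction), so $A^\#\models T_{\mathrm{C}^*,\os}^\#$ and therefore its reduct $A$ models $T_{\mathrm{C}^*,\os}$.

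The step that genuinely needs the ``few words'' is the floating-out of the $\inf_{x_j}$: classically one would just prenex the existential quantifiers attached to the $Q_{n_j}$ through the Boolean combination $\chi$ (positive occurrences contributing an $\exists$ to be merged with $\inf_{\bar y}$, negative ones contributing the trailing $\forall$), but a continuous connective need not be monotone in any argument, so an $\inf$ cannot be pulled through it directly --- reducing first to piecewise-linear connectives and then invoking the distributivity laws above is what repairs this. I expect the remaining difficulty to be purely bookkeeping: keeping track of the possibly many fresh variables (one per occurrence of a $Q_{n_j}$, and, after passing to $\min$--$\max$ normal form, possibly one per affine piece) and verifying that the substituted $\tau$ really does agree with the original axiom uniformly over all models of $T_{\mathrm{C}^*,\os}^\#$.
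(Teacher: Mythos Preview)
Your proof is correct and follows essentially the same strategy as the paper: approximate the connective in each $\forall\exists$-axiom of $T_{\mathrm{C}^*,\os}^\#$ by one built from functions that are monotone in each argument, so that after substituting the existential formulas $\psi_n$ for the $Q_n$ one can prenex to an $\exists\forall$-matrix, yielding a $\forall\exists\forall$-axiom. The only cosmetic difference is that the paper appeals to the ``restricted'' connectives of \cite[Section~6]{bbhu} for the approximation, whereas you use $\min$--$\max$--affine combinations; both families are dense and both are composites of monotone maps, so the prenexing step goes through identically.
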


\begin{proof}
Let $\sigma:= \sup_x\inf_y \varphi(x,y)=0$ be an $\la_{\os}^\#$-axiom for $T_{\mathrm{C}^*,\os}^\#$.  We can view $\varphi$ as an $\la_{\os}$-formula by replacing any occurrence of the new predicates $Q_n$ by the $\la_{\os}$-formula $\psi_n$.  Fix $\epsilon>0$ and take a \emph{restricted} $\la_{\os}$-formula $\theta(x,y)$ that approximates $\varphi$ to within $\epsilon$.  (See \cite[Section 6]{bbhu} for the definition of restricted formula.)  The key property of restricted formulae is that they are monotone in their arguments.  Thus, an easy induction shows that any formula formed from restricted connectives and atomic and existential formulae is logically equivalent to both a $\forall\exists$- and a $\exists\forall$-formulae.  It follows that, without loss of generality, we may assume that $\theta$ is an $\exists\forall$-formula of $\la_{\os}$.  The axiom $\sup_x\inf_y(\theta(x,y)\dotminus \epsilon)=0$ is an $\forall\exists\forall$-axiom and the totality of these axioms has the same expressive power as the original axiom.
\end{proof}

\begin{rmk}
It would be interesting to identify concrete $\forall\exists\forall$-axioms for $T_{\mathrm{C}^*,\os}$; the ``quasi-sentences'' appearing in the statement of Lemma 1 come tantalizingly close to such axioms.
\end{rmk}

We can also use this extended language to give some quantifier information about the $\la_{\os}$-definable predicate $P$ defining the graph of the algebra operation.

\begin{prop}
$P$ is a quantifier-free $\la_{\os}^\#$-definable predicate in $T_{\mathrm{C}^*,\os}^\#$.
\end{prop}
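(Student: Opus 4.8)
The plan is to combine the Beth Definability Theorem, as stated above, with the Pisier linearization result, now carried out in the expanded language $\la_{\os}^\#$ and keeping track of quantifier complexity. Recall that $P$ is the $\la_{\os}$-definable predicate with $P^A(x,y,z) = d(x\cdot y, z)$ on any \cstar-algebra $A$. The key point is that $P$ is already, by definition, a definable predicate in $T_{\mathrm{C}^*,\os}$, hence certainly in $T_{\mathrm{C}^*,\os}^\#$; what we must improve is the \emph{quantifier-free} aspect. So the first step is to set up an auxiliary language $\la_{\os}^{\#,P}$ obtained from $\la_{\os}^\#$ by adjoining a new ternary predicate symbol $R$ together with the axiom $\sup_{x,y,z}|R(x,y,z) - P(x,y,z)| = 0$, and to apply Beth to the pair $T_{\mathrm{C}^*,\os}^\# \subseteq T_{\mathrm{C}^*,\os}^{\#,P}$: any automorphism of a model that fixes the $\la_{\os}^\#$-reduct automatically fixes $R$ (since $R$ is $T^{\#}$-equivalent to $P$, which is already $\la_{\os}^\#$-definable). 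That gives us back that $R$, i.e. $P$, is $\la_{\os}^\#$-definable — which we knew — so Beth alone does not yield \emph{quantifier-freeness}; we need a sharper tool.

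The sharper tool is a quantifier-free version of Beth definability. The relevant statement is: if $T \subseteq T'$ with no new sorts and, for \emph{every} pair of models $M, N$ of $T'$ and every embedding (not just automorphism) $f : M|\la \to N|\la$ of the $\la$-reducts, $f$ is also an $\la'$-embedding $M \to N$, then every $\la'$-predicate is $T'$-equivalent to a quantifier-free $\la$-definable predicate. The crucial input, therefore, is: any $\la_{\os}^\#$-embedding between models of $T_{\mathrm{C}^*,\os}^\#$ is automatically a $^*$-homomorphism, hence preserves $P$. This is exactly the content that was extracted in the proof of the previous proposition: an $\la_{\os}^\#$-embedding is a complete order embedding that preserves each $Q_n = \psi_n$, hence sends unitaries to unitaries (using the characterization of unitaries in operator systems via $\psi_n$), and then Corollary \ref{pisier} (Pisier's linearization trick) forces it to be a $^*$-homomorphism. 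A $^*$-homomorphism $f$ satisfies $f(xy) = f(x)f(y)$, so for any $x,y,z$ in the domain, $d(f(x)f(y), f(z)) = d(f(xy), f(z)) = d(xy, z)$ because $^*$-homomorphisms between \cstar-algebras are isometric; hence $P$ is preserved. Thus the hypothesis of quantifier-free Beth is met, and $P$ is a quantifier-free $\la_{\os}^\#$-definable predicate in $T_{\mathrm{C}^*,\os}^\#$.

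The main obstacle I anticipate is formulating and justifying the quantifier-free Beth theorem in the continuous setting. In classical logic this is standard (a quantifier-free-definable predicate is one preserved by all embeddings, via a standard compactness/diagram argument), but in continuous logic one must be careful about the approximate nature of definable predicates: "quantifier-free definable" means a uniform limit of quantifier-free formulae, and the equivalence with "preserved by all embeddings between models of $T'$" requires the continuous-logic analogue of the separation-by-quantifier-free-formulae lemma (see \cite[Proposition 9.12 and its proof]{bbhu} or the Beth-type argument in \cite{nuclear}). I would either cite this directly or sketch the short argument: form the theory $T' \cup \mathrm{qfdiag}(M) \cup \{R(a,b,c) \geq r\}$ in the disjoint-copy language, show it is inconsistent using the embedding hypothesis, and apply continuous compactness to extract a single quantifier-free $\la$-formula and an $\epsilon$-bound, then let $\epsilon \to 0$. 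A secondary, more routine point is checking that $^*$-homomorphisms between \cstar-algebras are isometric — this is the standard fact that injective $^*$-homomorphisms are isometric, and for possibly non-injective ones one has $\|f(a)\| \leq \|a\|$ together with the reverse inequality on the image, giving $d(f(xy),f(z)) = d(xy,z)$ on the nose only when $f$ is injective; in general one gets $d(f(x)f(y), f(z)) = d(f(xy),f(z)) = \|f(xy - z)\|$, which is still what is needed since the definable predicate $P$ is being evaluated along the embedding and the relevant identity is $P^N(f(x),f(y),f(z)) = \|f(xy-z)\| = P^M(x,y,z)$ precisely when $f$ is isometric — and embeddings of operator systems (hence the $\la_{\os}^\#$-embeddings here) are by definition isometric complete order embeddings, so this is automatic.
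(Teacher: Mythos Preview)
Your proposal is correct and follows essentially the same route as the paper: show that every $\la_{\os}^\#$-embedding between models of $T_{\mathrm{C}^*,\os}^\#$ is a $^*$-homomorphism (since it preserves the $Q_n$, hence unitaries, hence by Corollary~\ref{pisier} is multiplicative), and conclude that $P$ is preserved by all such embeddings, whence quantifier-free definable. The paper compresses this into two lines, taking the ``preserved by embeddings $\Rightarrow$ quantifier-free definable'' criterion as a known model-theoretic fact rather than spelling out the quantifier-free Beth argument you sketch.
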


\begin{proof}
If $A\subseteq B$ is an inclusion of models of $T_{C^*,\os}^\#$, then as above, the inclusions is an inclusion of \cstar-algebras, whence it preserves $P$.
\end{proof}

\begin{cor}
For each $m\geq 1$, there is a continuous function $f_m:\r^{k(m)}\to \r$ and $\la_{\os}$-formulae $\varphi_1(a,b,c),\ldots,\varphi_{k(m)}(a,b,c)$ such that each $\varphi_i$ is either an atomic $\la_{\os}$-formula or else some $\psi_n$ and such that, for every \cstar-algebra $A$ and every $a,b,c\in A_1$, we have
$$\left|d(a\cdot b,c)-f_m(\varphi_1^A(a,b,c),\ldots,\varphi_{k(m)}^A(a,b,c))\right|<\epsilon.$$
\end{cor}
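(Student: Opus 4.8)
The plan is to unwind the statement of the preceding Proposition together with the definition of "quantifier-free $\la_{\os}^\#$-definable predicate" in continuous logic. By that Proposition, $P$ is a quantifier-free $\la_{\os}^\#$-definable predicate in $T_{\mathrm{C}^*,\os}^\#$; by the definition of definable predicate (see \cite[Section 9]{bbhu}), this means there is a sequence of quantifier-free $\la_{\os}^\#$-formulae $\chi_m(a,b,c)$ such that $\chi_m^A$ converges to $P^A$ uniformly over all models $A$ of $T_{\mathrm{C}^*,\os}^\#$, i.e.\ uniformly over all \cstar-algebras (with the predicates $Q_n$ interpreted as $\psi_n$). So first I would fix $\epsilon>0$, choose $m$ with $\|\chi_m^A-P^A\|_\infty<\epsilon$ for all \cstar-algebras $A$, and record that on the unit balls $P^A(a,b,c)=d(a\cdot b,c)$ by the Corollary following the Beth Definability discussion.

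Next I would analyze the syntactic form of the quantifier-free $\la_{\os}^\#$-formula $\chi_m$. A quantifier-free formula in continuous logic is built from atomic formulae by applying continuous connectives; in $\la_{\os}^\#$ the atomic formulae are the atomic $\la_{\os}$-formulae together with the new unary predicate symbols $Q_n(t)$ applied to terms $t$. Since $Q_n$ is a \emph{unary} predicate and the only terms of $\la_{\os}$ available as its argument are built from the operator-system operations, each occurrence $Q_n(t)$ in a \cstar-algebra model equals $\psi_n$ evaluated at the corresponding element. So, up to a uniform error, $d(a\cdot b,c)$ is a continuous connective applied to finitely many values, each of which is either an atomic $\la_{\os}$-formula evaluated at $(a,b,c)$ or some $\psi_n$ evaluated at an $\la_{\os}$-term in $(a,b,c)$. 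Finally I would observe that a continuous connective is by definition a continuous function $\r^{k}\to\r$ (after restricting to the relevant compact range of the bounded formulae, which is legitimate since all the inputs are formulae with values in a fixed bounded interval once we work inside unit balls), set $f_m$ to be this function, and set $\varphi_1,\dots,\varphi_{k(m)}$ to be the list of those atomic $\la_{\os}$-formulae and those $\psi_n$'s, thereby obtaining exactly the claimed inequality $\bigl|d(a\cdot b,c)-f_m(\varphi_1^A,\dots,\varphi_{k(m)}^A)\bigr|<\epsilon$.

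The only mildly delicate point — and the step I expect to require the most care — is the bookkeeping that reduces a general quantifier-free $\la_{\os}^\#$-formula to the advertised normal form "continuous function of atomic formulae and $\psi_n$'s." One must check that arguments of $Q_n$ are always genuine $\la_{\os}$-terms (so that substituting $\psi_n$ is literally correct inside every \cstar-algebra model of $T_{\mathrm{C}^*,\os}^\#$), and that nesting of connectives collapses into a single outer continuous function $f_m$ with the inner formulae pulled out as the list $\varphi_i$; both are routine inductions on the construction of $\chi_m$, using that composition of continuous functions is continuous and that $\psi_n$ itself, while not atomic, is permitted in the list by the statement. One should also note that $\psi_n$ involves an $\inf$ over the unit ball, so it is not quantifier-free as an $\la_{\os}$-formula — but the statement explicitly allows $\psi_n$ as one of the $\varphi_i$, so this causes no difficulty; the content of the corollary is precisely that the \emph{only} non-quantifier-free ingredients needed are these fixed predicates $\psi_n$.
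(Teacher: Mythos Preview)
Your proposal is correct and is precisely the argument the paper has in mind: the Corollary is stated in the paper with no proof, as an immediate consequence of the preceding Proposition that $P$ is quantifier-free $\la_{\os}^\#$-definable, and your unwinding of that statement (uniform approximation by quantifier-free $\la_{\os}^\#$-formulae, then recognizing that in any model of $T_{\mathrm{C}^*,\os}^\#$ the atomic $\la_{\os}^\#$-formulae are exactly the atomic $\la_{\os}$-formulae together with the $Q_n$'s, which coincide with the $\psi_n$'s) is exactly the intended justification. Your care about terms appearing inside $Q_n$ and about collapsing nested connectives into a single continuous $f_m$ is appropriate and routine, as you say.
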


We observe essentially by the same reasoning as Lemma \ref{mainlemma} that for $u, v, w\in A$ unitaries, the formula $$\varphi_{un}^A(u,v,w) := \left\|\begin{matrix} u & w \\ 1 & -v^*\end{matrix}\right\|^2 - 2$$ is absolutely continuous with respect to $d^A(u, v\cdot w)$.

\appendix

\section*{Appendix on Pisier's Linearization Trick}

The following facts follow immediately from Stinespring's Dilation Theorem; see, for example, \cite[Theorem 18]{ozawa}.
\begin{fact}\label{kadison}
Suppose that $\phi:A\to B$ is a u.c.p.\ map between \cstar-algebras.  Then for all $x,y\in A$, we have
$$\phi(x)^*\phi(x)\leq \phi(x^*x)$$ and 
$$\|\phi(y^*x)-\phi(y)^*\phi(x)\|\leq \|\phi(y^*y)-\phi(y)^*\phi(y)\|^{1/2}\|\phi(x^*x)-\phi(x)^*\phi(x)\|^{1/2}.$$
\end{fact}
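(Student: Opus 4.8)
The plan is to deduce both inequalities from Stinespring's dilation theorem, which linearizes $\phi$. First, represent $B$ faithfully and unitally on a Hilbert space $H$, so that $\phi$ becomes a unital completely positive map $A\to\B(H)$; since the representation of $B$ is an isometric $*$-isomorphism onto its image, it suffices to prove the two inequalities for this $\B(H)$-valued map. Stinespring's theorem then provides a Hilbert space $K$, a unital $*$-representation $\pi\colon A\to\B(K)$, and an operator $V\colon H\to K$ with $\phi(a)=V^*\pi(a)V$ for all $a\in A$; because $\phi(1)=1$ and $\pi$ is unital, $V^*V=V^*\pi(1)V=\phi(1)=1$, so $V$ is an isometry. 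Write $P:=VV^*\in\B(K)$ for the orthogonal projection onto $V(H)$; thus $0\le P\le 1$ and $1-P$ is again a projection.

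For the first inequality (the Kadison--Schwarz inequality) I would simply compute, using $\pi(x^*x)=\pi(x)^*\pi(x)$ and $P\le 1$,
$$\phi(x)^*\phi(x)=V^*\pi(x)^*\,P\,\pi(x)V\ \le\ V^*\pi(x)^*\pi(x)V=V^*\pi(x^*x)V=\phi(x^*x),$$
where the middle inequality is justified because $c\mapsto V^*cV$ and $c\mapsto\pi(x)^*c\,\pi(x)$ are positive maps.

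For the second inequality, the key identity is
$$\phi(y^*x)-\phi(y)^*\phi(x)=V^*\pi(y)^*(1-P)\pi(x)V=a^*b,\qquad a:=(1-P)\pi(y)V,\quad b:=(1-P)\pi(x)V,$$
obtained by expanding $\phi(y)^*\phi(x)=V^*\pi(y)^*\,P\,\pi(x)V$, using $\pi(y^*x)=\pi(y)^*\pi(x)$, and using that $1-P$ is a self-adjoint idempotent. Hence $\|\phi(y^*x)-\phi(y)^*\phi(x)\|=\|a^*b\|\le\|a\|\,\|b\|$, and since $\|a\|^2=\|a^*a\|=\|V^*\pi(y)^*(1-P)\pi(y)V\|=\|\phi(y^*y)-\phi(y)^*\phi(y)\|$ and likewise $\|b\|^2=\|\phi(x^*x)-\phi(x)^*\phi(x)\|$, the claimed bound follows.

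There is no real obstacle here beyond bookkeeping: the one point that genuinely uses the hypotheses is that $V$ is an \emph{isometry}, which is exactly where unitality of $\phi$ enters (for a general u.c.p.\ map one only gets $\|V\|^2=\|\phi(1)\|$), and one must be careful to track which of $P$ and $1-P$ is used where and to remember that both are positive contractions. Everything else is positivity of the maps $c\mapsto V^*cV$ and $c\mapsto w^*cw$ together with the C$^*$-identity $\|a\|^2=\|a^*a\|$.
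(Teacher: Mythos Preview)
Your argument is correct and is exactly the route the paper intends: it does not give a proof at all but simply states that both inequalities ``follow immediately from Stinespring's Dilation Theorem'' with a reference, and your write-up is the standard unpacking of that remark. One tiny slip in your final paragraph: a ``u.c.p.'' map is by definition unital, so the contrast you draw should be with a general completely positive (not u.c.p.) map; this does not affect the proof itself.
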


\begin{cor}\label{pisier}
Suppose that $\phi:A\to B$ is  u.c.p.\ map between \cstar-algebras that maps unitaries to unitaries.  Then $\phi$ is a $^*$-homomorphism.
\end{cor}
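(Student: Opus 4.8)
The plan is to extract the algebraic identity $\phi(y^*x)=\phi(y)^*\phi(x)$ for all $x,y\in A$ directly from the two inequalities in Fact~\ref{kadison}, and then to read off both multiplicativity and $*$-preservation by specialization. The only real input is Fact~\ref{kadison} (equivalently, Stinespring's theorem).

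First I would observe that, being u.c.p., $\phi$ is in particular unital, so $\phi(1_A)=1_B$. Hence for any unitary $u\in A$ we have $\phi(u^*u)=\phi(1_A)=1_B$, while the hypothesis that $\phi$ sends unitaries to unitaries gives $\phi(u)^*\phi(u)=1_B$ as well. Thus the ``Kadison--Schwarz defect'' $\phi(u^*u)-\phi(u)^*\phi(u)$ vanishes at every unitary of $A$. Feeding this into the second inequality of Fact~\ref{kadison} with $x=u$ and $y=v$ two unitaries, the right-hand side becomes $0$, so $\phi(v^*u)=\phi(v)^*\phi(u)$ for all unitaries $u,v\in A$.

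Next I would upgrade this to arbitrary elements. Recall that in a unital \cstar-algebra every element is a finite linear combination of unitaries: writing $a=h+ik$ with $h,k$ self-adjoint and (after scaling) contractions, each such self-adjoint contraction $h$ equals $\frac12(w+w^*)$ for the unitary $w=h+i(1-h^2)^{1/2}$. Both $(y,x)\mapsto\phi(y^*x)$ and $(y,x)\mapsto\phi(y)^*\phi(x)$ are additive, conjugate-linear in the first variable, and linear in the second, and they agree on pairs of unitaries; since the unitaries span $A$, they agree on all of $A\times A$. Hence $\phi(y^*x)=\phi(y)^*\phi(x)$ for all $x,y\in A$.

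Finally, specializing $x=1_A$ yields $\phi(y^*)=\phi(y)^*$, so $\phi$ is $*$-preserving; substituting $y^*$ for $y$ in the identity then gives $\phi(yx)=\phi(y)\phi(x)$, so $\phi$ is multiplicative. Therefore $\phi$ is a $*$-homomorphism. I do not expect a serious obstacle here: the substantive estimate is entirely contained in Fact~\ref{kadison}, and the only points requiring minor care are that ``u.c.p.'' already supplies unitality and that the sesquilinear extension from unitaries to all elements is legitimate.
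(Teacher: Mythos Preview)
Your proof is correct and follows essentially the same idea as the paper's: both observe that the Kadison--Schwarz defect $\phi(u^*u)-\phi(u)^*\phi(u)$ vanishes at unitaries, invoke the second inequality in Fact~\ref{kadison}, and then use that unitaries span $A$. The only difference is packaging: the paper phrases the extension step via the multiplicative domain $M_\phi=\{a:\phi(a^*)\phi(a)=\phi(a^*a),\ \phi(a)\phi(a^*)=\phi(aa^*)\}$, quoting that Fact~\ref{kadison} makes $M_\phi$ a \cstar-subalgebra on which $\phi$ is multiplicative, and then notes $\mathcal U(A)\subset M_\phi$ forces $M_\phi=A$; you instead verify the identity $\phi(v^*u)=\phi(v)^*\phi(u)$ on unitary pairs directly and extend by sesquilinearity. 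Your version is more self-contained, while the paper's is more conceptual; the content is the same. (As a minor aside, $*$-preservation already follows from positivity of $\phi$, so your final specialization to $x=1_A$ is redundant though harmless.)
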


\begin{proof}

The previous fact shows that the set $$M_\phi := \{a\in A : \phi(a^*)\phi(a) = \phi(a^*a),\ \phi(a)\phi(a^*) = \phi(aa^*)\}$$ is a C$^*$-subalgebra of $A$ on which $\phi$ is a $^\ast$-homomorphism. By assumption we have that $\mathcal U(A)\subset M_\phi$ whence $M_\phi = A$.
\end{proof}

\begin{cor}\label{forgetful}
Suppose that $\phi:A\to B$ is a linear isometry between \cstar-algebras that is also u.c.p.  Then $\phi$ is a $^*$-isomorphism.
\end{cor}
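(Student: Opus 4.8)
The plan is to show that $\phi$ is a $^*$-homomorphism and then conclude: an isometry onto $B$ is a bijection, and a bijective $^*$-homomorphism of C$^*$-algebras is a $^*$-isomorphism. Two preliminary remarks. First, surjectivity really is needed here: the u.c.p.\ isometry $\mathbb{C}^2\to\mathbb{C}^3$, $(a,b)\mapsto(a,\tfrac{a+b}{2},b)$, is not multiplicative, so ``isometry'' in the statement must be read as ``isometric isomorphism of Banach spaces''. Second, being u.c.p., $\phi$ is automatically unital, self-adjoint ($\phi(x^*)=\phi(x)^*$), contractive, and subject to the Kadison--Schwarz inequalities of Fact \ref{kadison}; in particular $\phi(x)^*\phi(x)\le\phi(x^*x)$ and $\phi(x)\phi(x)^*\le\phi(xx^*)$ for all $x\in A$.

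The key additional input is Kadison's generalization of the Banach--Stone theorem: a surjective unital linear isometry between unital C$^*$-algebras is a Jordan $^*$-isomorphism. Applying this to $\phi$ yields $\phi(h^2)=\phi(h)^2$ for every self-adjoint $h\in A$.

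From here the argument follows the proof of Corollary \ref{pisier}. For self-adjoint $h$ we get $\phi(h^*h)=\phi(h^2)=\phi(h)^2=\phi(h)^*\phi(h)$, and likewise $\phi(hh^*)=\phi(h)\phi(h)^*$, so every self-adjoint element of $A$ lies in the multiplicative domain $M_\phi=\{a\in A:\phi(a^*)\phi(a)=\phi(a^*a),\ \phi(a)\phi(a^*)=\phi(aa^*)\}$. By the Cauchy--Schwarz estimate in Fact \ref{kadison} (exactly as in the proof of Corollary \ref{pisier}), $M_\phi$ is a C$^*$-subalgebra of $A$; since it contains every self-adjoint element it must be all of $A$. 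Hence $\phi$ is a $^*$-homomorphism, which is what we wanted.

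The one non-elementary ingredient is Kadison's Banach--Stone theorem; everything after it is bookkeeping with the multiplicative domain. In the case of primary interest, where $\phi$ is a complete order \emph{isomorphism}, one can sidestep Kadison's theorem entirely: then $\phi^{-1}$ is also u.c.p., so Fact \ref{kadison} applied to both $\phi$ and $\phi^{-1}$ gives, for each unitary $u\in A$, first $\phi(u)^*\phi(u)\le\phi(u^*u)=1$ and second $1=u^*u\le\phi^{-1}\!\big(\phi(u)^*\phi(u)\big)$; applying the order-preserving map $\phi$ to the latter gives $1\le\phi(u)^*\phi(u)$, hence $\phi(u)^*\phi(u)=1$, and symmetrically $\phi(u)\phi(u)^*=1$. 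Thus $\phi$ sends unitaries to unitaries and Corollary \ref{pisier} applies directly.
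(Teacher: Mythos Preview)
Your proof is correct, but it takes a different route from the paper's, and arguably a less economical one. You invoke Kadison's Banach--Stone theorem to get that $\phi$ is a Jordan $^*$-isomorphism, which puts every self-adjoint element into the multiplicative domain. The paper avoids this external input entirely and works only with Fact~\ref{kadison}: it shows that for any unitary $v\in B$, the preimage $u=\phi^{-1}(v)$ is a unitary of $A$ (from $1=\phi(u)^*\phi(u)\le\phi(u^*u)\le 1$ and injectivity), and then writes an arbitrary pair $\phi(x),\phi(y)$ as averages of four unitaries in $B$, pulling these back to unitaries in $A$. Since each such $u_i$ satisfies $\phi(u_i)^*\phi(u_i)=1=\phi(u_i^*u_i)$, it lies in the multiplicative domain, and the Cauchy--Schwarz estimate in Fact~\ref{kadison} gives $\phi(y^*x)=\phi(y)^*\phi(x)$ by bilinearity. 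So both arguments funnel through the multiplicative domain, but the paper populates it with (preimages of) unitaries rather than self-adjoints, and needs nothing beyond Fact~\ref{kadison} and the elementary ``every contraction is an average of four unitaries'' trick.

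Your alternative argument for the complete order isomorphism case is nice and in the same spirit as the paper --- indeed it is essentially the paper's argument run in the forward direction: you show $\phi$ sends unitaries of $A$ to unitaries of $B$ (using Kadison--Schwarz for $\phi^{-1}$), whereas the paper shows $\phi^{-1}$ sends unitaries of $B$ to unitaries of $A$. Note, though, that the paper's version already handles the general u.c.p.\ isometry hypothesis without needing $\phi^{-1}$ to be u.c.p.: it only uses that $\phi^{-1}$ exists as a linear map, so your appeal to Kadison--Banach--Stone is not actually required even outside the complete-order-isomorphism case. Your observation about surjectivity being implicit in the statement, and the $\mathbb{C}^2\to\mathbb{C}^3$ counterexample, is well taken.
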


\begin{proof}

First suppose that $v\in B$ is a unitary and let $u:=\phi^{-1}(v)$.  We then have
$$1=\phi(u)^*\phi(u)\leq \phi(u^*u)\leq \phi(1)=1,$$ so $\phi(1-u^*u)=0$.  Since $\phi$ is injective, we see that $u^*u=1$.  Similarly, we have
$$1=\phi(u)\phi(u)^*=\phi(u^*)^*\phi(u^*)\leq \phi(uu^*)\leq \phi(1)=1,$$ whence we can likewise conclude that $uu^*=1$ and hence $u$ is a unitary of $A$.

 Now fix $x,y\in A$ with $\|x\|,\|y\|\leq 1$.  Since $\|\phi(x)\|,\|\phi(y)\|\leq 1$ as well, we can write $\phi(x)=\frac{1}{2}(v_1+\cdots +v_4)$ and $\phi(y)=\frac{1}{2}(v_1'+\cdots +v_4')$, each $v_i$ and $v_j'$ unitaries in $B$.  It then follows that $x=\frac{1}{2}(u_1+\cdots+u_4)$ and $y=\frac{1}{2}(u_1'+\cdots+u_4')$, with $u_i:=\phi^{-1}(v_i)$ and $u_j':=\phi^{-1}(v_j')$ unitaries of $A$.  It follows that
$$\phi(y^*x)=\frac{1}{4}\sum_{i,j}\phi((u_j')^*u_i)=\frac{1}{4}\sum_{i,j}(v_j')^*v_i=\phi(y)^*\phi(x),$$ where the second equality follows from the inequality in Fact \ref{kadison}. 
\end{proof}

\end{document}